\newcounter{minutes}
\newcounter{hours}
\DeclareMathOperator{\dist}{dist}
\font\fFt=eusm10 
\font\fFa=eusm7  
\font\fFp=eusm5  
\def\K{\mathchoice
{\hbox{\,\fFt K}}
{\hbox{\,\fFt K}}
{\hbox{\,\fFa K}}
{\hbox{\,\fFp K}}}
\dedicatory{}
\theoremstyle{plain}
\newtheorem{thm}[equation]{Theorem}
\newtheorem{lem}[equation]{Lemma}
\newtheorem{prop}[equation]{Proposition}
\theoremstyle{definition}
\theoremstyle{remark}
\newtheorem{nonsec}[equation]{}
\numberwithin{equation}{section}
\newcommand{\beq}{\begin{equation}}
\newcommand{\eeq}{\end{equation}}
\newcommand{\ben}{\begin{enumerate}}
\newcommand{\een}{\end{enumerate}}
\newcommand{\bequu}{\begin{eqnarray*}}
\newcommand{\eequu}{\end{eqnarray*}}
\newcommand{\bequ}{\begin{eqnarray}}
\newcommand{\eequ}{\end{eqnarray}}
\newcommand{\sh}{\,\textnormal{sh}}
\renewcommand{\th}{\,\textnormal{th}}
\begin{document}
\thispagestyle{empty}
\def\thefootnote{}


\title[quasiregular mappings of the half plane]
{On distortion of quasiregular mappings\\ of the upper half plane }


\author[M. Fujimura]{Masayo Fujimura}
\author[M. Vuorinen]{Matti Vuorinen}

\date{}

\begin{abstract}
We  prove a sharp result for the  distortion of a hyperbolic type metric 
under $K$-quasiregular mappings of the upper half plane. The proof makes use
of a new kind of Bernoulli inequality and the Schwarz lemma for quasiregular
mappings. 
\end{abstract}

\keywords{M\"obius transformations, hyperbolic geometry, Bernoulli inequality, quasiconformal mappings}
\subjclass[2010]{30C60}


\maketitle
\noindent \textbf{Author information.}\\
Masayo Fujimura$^1$, email: \texttt{masayo@nda.ac.jp}, ORCID: 0000-0002-5837-8167\\
Matti Vuorinen$^2$, email: \texttt{vuorinen@utu.fi}, ORCID: 0000-0002-1734-8228\\
1: Department of Mathematics, National Defense Academy of Japan, Yokosuka, Japan\\
2: Department of Mathematics and Statistics, University of Turku, FI-20014 Turku, Finland\\
\textbf{Data availability statement.} Not applicable, no new data was generated.\\
\textbf{Conflict of interest statement.} There is no conflict of interest.\\


\footnotetext{\texttt{{\tiny File:~\jobname .tex, printed: \number\year-%
\number\month-\number\day, \thehours.\ifnum\theminutes<10{0}\fi\theminutes}}}
\makeatletter

\makeatother



\section{Introduction}
Alongside the hyperbolic metric, several  other metrics similar to it have
become standard tools of geometric function theory  \cite{dhv,fmv,gh,ha,h, hkv}.
In \cite[pp.42-48]{p} the author lists twelve metrics recurrent in complex
analysis.
These metrics, sometimes called metrics of  hyperbolic type, are typically  
not M\"obius invariant, but they are often quasi-invariant
and within a constant factor from the hyperbolic metric.
Here we study one of these metrics, defined on a proper subdomain   $D$ 
of a metric space $(X, d)$ as follows
\begin{equation}
\label{hc}
h_{D,c}(x,y) = \log\left(1+c\frac{d(x,y)}{\sqrt{d_D(x)d_D(y)}}\right),\quad c>0,
\end{equation}
where $d_{D}(x)=\dist (x, \partial D).$ 

This metric has been studied in several very recent papers.
In  \cite{ra},  the metric \eqref{hc} was called the {\it geometric mean distance metric}. In \cite{ra} also comparison inequalities between  the metric \eqref{hc}
and some other metrics were given and  the geometry defined by the metric studied. 
For further work, see also \cite{wwjz,zzpg}.

\begin{thm}\label{th1} (\cite[Thm 1.1]{dhv})
 The function \eqref{hc}
is a metric for every $c \geq 2$. The constant $2$ is best possible here.
\end{thm}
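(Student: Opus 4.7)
The plan is to reduce the only substantive axiom---the triangle inequality---to a sharp algebraic inequality, and then witness sharpness with an explicit triple. Non-negativity, the identity $h_{D,c}(x,y)=0\Leftrightarrow x=y$, and symmetry are all immediate from \eqref{hc}. Fix $x,y,z\in D$ and write $a=d(x,y)$, $b=d(y,z)$, $p=d_{D}(x)$, $q=d_{D}(y)$, $r=d_{D}(z)$. Exponentiating the desired inequality $h_{D,c}(x,z)\le h_{D,c}(x,y)+h_{D,c}(y,z)$ and applying $d(x,z)\le a+b$, it suffices to show
\[
1+\frac{c(a+b)}{\sqrt{pr}}\ \le\ \Bigl(1+\frac{ca}{\sqrt{pq}}\Bigr)\Bigl(1+\frac{cb}{\sqrt{qr}}\Bigr).
\]
Expanding the product and multiplying by $\sqrt{pr}/c$, this reduces to
\[
a\bigl(1-\sqrt{r/q}\bigr)+b\bigl(1-\sqrt{p/q}\bigr)\ \le\ \frac{c\,ab}{q}.
\]

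The crux is to control the two bracketed quantities on the left via the $1$-Lipschitz property of $d_{D}$. From $|q-r|\le b$ one gets $r/q\ge 1-b/q$, from which I would deduce the key estimate $1-\sqrt{r/q}\le b/q$ without any case hypothesis: if $b\ge q$ the right-hand side already exceeds $1$, and if $b<q$ the inequality $\sqrt{r/q}\ge 1-b/q$ follows by squaring and chaining $(1-b/q)^{2}\le 1-b/q\le r/q$. Symmetrically, $1-\sqrt{p/q}\le a/q$. Substitution then gives
\[
a\bigl(1-\sqrt{r/q}\bigr)+b\bigl(1-\sqrt{p/q}\bigr)\ \le\ \frac{ab}{q}+\frac{ba}{q}\ =\ \frac{2ab}{q},
\]
which is exactly the required estimate for $c=2$, and hence for every $c\ge 2$.

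For sharpness I would exhibit a configuration in which both $x$ and $z$ lie near the boundary of $D$, but approach \emph{distinct} boundary points. A clean choice is $D=(0,1)\subset\mathbb{R}$ with the Euclidean metric, so $d_{D}(t)=\min(t,1-t)$; set $x_{\varepsilon}=\varepsilon$, $y=1/2$, $z_{\varepsilon}=1-\varepsilon$. Then $d_{D}(x_{\varepsilon})=d_{D}(z_{\varepsilon})=\varepsilon$, $d_{D}(y)=1/2$, $d(x_{\varepsilon},y)=d(y,z_{\varepsilon})=1/2-\varepsilon$, and $d(x_{\varepsilon},z_{\varepsilon})=1-2\varepsilon$. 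A short asymptotic as $\varepsilon\to 0^{+}$ yields
\[
h_{D,c}(x_{\varepsilon},z_{\varepsilon})-h_{D,c}(x_{\varepsilon},y)-h_{D,c}(y,z_{\varepsilon})\ \longrightarrow\ \log(2/c),
\]
which is positive whenever $c<2$, so the triangle inequality must fail for every such $c$.

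The main obstacle is spotting the compact estimate $1-\sqrt{r/q}\le b/q$; the more familiar Bernoulli-type bound $\sqrt{1-t}\le 1-t/2$ runs in the opposite direction and is useless here. Once the right elementary inequality is in hand, the rest is routine algebra and Lipschitz bookkeeping, and the strip-like configuration above gives optimality essentially for free.
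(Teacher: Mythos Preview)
The paper does not contain its own proof of this statement: Theorem~\ref{th1} is quoted verbatim from \cite[Thm~1.1]{dhv}, and the only comment the paper adds is that sharpness was established in \cite[Rem.~3.29]{dhv} via the unit disk $\mathbb{B}^2$. So there is no in-paper argument to compare against. That said, your self-contained proof is correct.

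Your reduction of the triangle inequality to
\[
a\bigl(1-\sqrt{r/q}\bigr)+b\bigl(1-\sqrt{p/q}\bigr)\le \frac{c\,ab}{q}
\]
is clean, and the key estimate $1-\sqrt{r/q}\le b/q$ (obtained from the $1$-Lipschitz bound $|q-r|\le b$ for $d_D$ and the elementary observation $(1-b/q)^2\le 1-b/q$ when $0<b/q<1$) is exactly the right move; the two cases $b\ge q$ and $b<q$ are handled correctly. Substituting both estimates gives the factor $2$ on the nose, so $c\ge 2$ suffices.

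For sharpness you use $D=(0,1)\subset\mathbb{R}$ rather than the disk $\mathbb{B}^2$ mentioned in the paper. Your asymptotic is correct: with $x_\varepsilon=\varepsilon$, $y=1/2$, $z_\varepsilon=1-\varepsilon$ one has
\[
\frac{e^{h_{D,c}(x_\varepsilon,z_\varepsilon)}}{e^{h_{D,c}(x_\varepsilon,y)+h_{D,c}(y,z_\varepsilon)}}
=\frac{1+c(1-2\varepsilon)/\varepsilon}{\bigl(1+c(1/2-\varepsilon)\sqrt{2/\varepsilon}\bigr)^2}
\longrightarrow \frac{2}{c}\quad(\varepsilon\to 0^+),
\]
so the triangle inequality fails for every $c<2$. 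Both witnesses work; yours is arguably the minimal one.
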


\medskip

Here the constant $c$ is sharp in the case if the domain $D$ is 
the unit disk $\mathbb{B}^2.$ It was shown in \cite[Rem. 3.29]{dhv} 
that $h_{\mathbb{B}^2,c }$ is not a metric if $0<c<2.$ For the half plane 
${\mathbb{H}^2}$ case we prove the following result.

\medskip

\begin{thm}\label{th2}
 The function  $h_{\mathbb{H}^2,c }$
is a metric for every $c \ge 1$. 
\end{thm}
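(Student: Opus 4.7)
The plan is to verify the triangle inequality (symmetry and positivity are immediate from the definition). For $x,y,z \in \mathbb{H}^2$, writing $p=d_{\mathbb{H}^2}(x)$, $q=d_{\mathbb{H}^2}(y)$, $r=d_{\mathbb{H}^2}(z)$, the inequality $h_{\mathbb{H}^2,c}(x,z) \leq h_{\mathbb{H}^2,c}(x,y)+h_{\mathbb{H}^2,c}(y,z)$ is, after exponentiating, equivalent to
\[
1 + c\frac{|x-z|}{\sqrt{pr}} \leq \left(1 + c\frac{|x-y|}{\sqrt{pq}}\right)\left(1 + c\frac{|y-z|}{\sqrt{qr}}\right).
\]

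The key idea I would use is the well-known half-plane identity
\[
\sinh\!\left(\tfrac{1}{2}\rho_{\mathbb{H}^2}(x,y)\right) = \frac{|x-y|}{2\sqrt{d_{\mathbb{H}^2}(x)\,d_{\mathbb{H}^2}(y)}},
\]
where $\rho_{\mathbb{H}^2}$ is the hyperbolic metric on $\mathbb{H}^2$. Setting $\alpha = \rho_{\mathbb{H}^2}(x,y)$, $\beta = \rho_{\mathbb{H}^2}(y,z)$, $\gamma = \rho_{\mathbb{H}^2}(x,z)$, the target inequality becomes
\[
1 + 2c\sinh(\gamma/2) \leq \bigl(1+2c\sinh(\alpha/2)\bigr)\bigl(1+2c\sinh(\beta/2)\bigr),
\]
i.e.
\[
\sinh(\gamma/2) \leq \sinh(\alpha/2) + \sinh(\beta/2) + 2c\sinh(\alpha/2)\sinh(\beta/2).
\]
By the hyperbolic triangle inequality $\gamma \leq \alpha+\beta$ and monotonicity of $\sinh$, it suffices to prove this with $\gamma$ replaced by $\alpha+\beta$.

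The remaining algebra is short: the addition formula gives $\sinh((\alpha+\beta)/2) = \sinh(\alpha/2)\cosh(\beta/2) + \cosh(\alpha/2)\sinh(\beta/2)$, so the inequality reduces to
\[
\sinh(\alpha/2)\bigl(\cosh(\beta/2)-1\bigr) + \sinh(\beta/2)\bigl(\cosh(\alpha/2)-1\bigr) \leq 2c\sinh(\alpha/2)\sinh(\beta/2).
\]
Applying $\cosh(t/2)-1 = 2\sinh^2(t/4)$ and $\sinh(t/2)=2\sinh(t/4)\cosh(t/4)$ and dividing by $\sinh(\alpha/2)\sinh(\beta/2)$ (assuming $\alpha,\beta>0$; the degenerate cases are trivial), this is exactly
\[
\tanh(\alpha/4) + \tanh(\beta/4) \leq 2c,
\]
which holds for every $c\geq 1$ since $\tanh<1$.

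The main obstacle is conceptual rather than computational: spotting that $|x-y|/\sqrt{d_{\mathbb{H}^2}(x)d_{\mathbb{H}^2}(y)}$ is precisely $2\sinh(\rho_{\mathbb{H}^2}(x,y)/2)$ transforms the problem from a messy Euclidean calculation involving three points with three different boundary distances into a one-variable inequality governed by the hyperbolic triangle inequality. After that substitution, only standard hyperbolic addition identities are needed, and the bound $\tanh<1$ yields precisely the threshold $c\geq 1$.
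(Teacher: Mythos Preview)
Your proof is correct and shares with the paper the crucial observation that $|x-y|/\sqrt{d_{\mathbb{H}^2}(x)d_{\mathbb{H}^2}(y)} = 2\sinh(\rho_{\mathbb{H}^2}(x,y)/2)$, so that $h_{\mathbb{H}^2,c}(x,y)=F(\rho_{\mathbb{H}^2}(x,y))$ with $F(t)=\log(1+2c\sinh(t/2))$. From there, however, the two arguments diverge. The paper proves the triangle inequality by showing that $F(t)/t$ is decreasing (via a chain of three preparatory lemmas and the substitution $x=e^{t/2}$), and then invokes the general principle that an increasing $g$ with $g(0)=0$ and $g(t)/t$ decreasing is subadditive. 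You instead verify $F(\alpha+\beta)\le F(\alpha)+F(\beta)$ directly: the $\sinh$ addition formula and the half-angle identities collapse everything to $\tanh(\alpha/4)+\tanh(\beta/4)\le 2c$, which is immediate for $c\ge 1$. Your route is shorter and more elementary, and it makes the threshold $c=1$ transparent (it is exactly the bound $\tanh<1$); the paper's route has the advantage of isolating a reusable monotonicity property of $F$ that it can cite elsewhere.
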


\medskip

The behavior of this metric under quasiconformal mappings was also studied  in
\cite[Thm 4.9]{dhv}. Our goal here is to prove the following new sharp result
for quasiregular mappings of the upper half plane ${\mathbb{H}^2}.$

\medskip

\begin{thm}\label{th3} For $K \ge 1$  there exists a constant $\lambda(K) \in [1, \exp(\pi(K-1/K)))$ 
such that for a $K$-quasiregular mapping
$f: {\mathbb{H}^2} \to {\mathbb{H}^2} =f( {\mathbb{H}^2})$ and
for all $x,y \in {\mathbb{H}^2}, c \ge 1,$
\[
h_{\mathbb{H}^2, c}(f(x),f(y))\le \lambda(K)^{1/2} \,K^{1+c}\, 
\max\{ h_{\mathbb{H}^2, c}(x,y)^{1/K} ,h_{\mathbb{H}^2, c}(x,y)\}\,.
\]
\end{thm}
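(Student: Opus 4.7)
The plan is to reduce Theorem \ref{th3} to the Schwarz lemma for $K$-quasiregular self-maps of $\mathbb{H}^2$, and then to transport the resulting hyperbolic estimate back into a bound for $h_{\mathbb{H}^2,c}$ by a sharp logarithmic inequality of Bernoulli type. The first step is to rewrite the metric $h_{\mathbb{H}^2,c}$ in hyperbolic form. Using the standard identity $|x-y|/\sqrt{\mathrm{Im}(x)\mathrm{Im}(y)} = 2\sinh(\rho_{\mathbb{H}^2}(x,y)/2)$, one obtains
$h_{\mathbb{H}^2,c}(x,y)=\log\bigl(1+2c\sinh(\rho_{\mathbb{H}^2}(x,y)/2)\bigr),$
so that Theorem \ref{th3} is equivalent to a statement about how $\sinh(\rho_{\mathbb{H}^2}/2)$ is distorted under $f$.

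Next, I would invoke the quasiregular Schwarz lemma in the form $\tanh(\rho_{\mathbb{H}^2}(f(x),f(y))/2) \le \varphi_K(\tanh(\rho_{\mathbb{H}^2}(x,y)/2))$, together with the sharp Hersch--Pfluger type bound $\varphi_K(r) \le \lambda(K)^{1-1/K} r^{1/K}$. Combining this with $\sinh(t)=\tanh(t)/\sqrt{1-\tanh^2 t}$ produces, after short manipulation, an estimate of the form $\sinh(\rho_{\mathbb{H}^2}(f(x),f(y))/2) \le \lambda(K)^{1/2}\,s^{1/K}$ in the range where $s=\sinh(\rho_{\mathbb{H}^2}(x,y)/2)$ is not too small, together with a companion estimate that is essentially linear in $s$ when $s$ is small. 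The square-root exponent on $\lambda(K)$ appears because the factor $(1-\tanh^2)^{-1/2}$ has to be absorbed when passing from $\tanh$ back to $\sinh$.

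Combining the two preceding steps reduces the theorem to a Bernoulli-type inequality of the shape
\[
\log\!\bigl(1 + A\,t^{1/K}\bigr) \;\le\; \lambda(K)^{1/2}\,K^{1+c}\,\max\{H,\,H^{1/K}\},
\]
where $t=2cs$, $H=\log(1+t)=h_{\mathbb{H}^2,c}(x,y)$, and $A=A(c,K)$ is the constant delivered by the Schwarz step. For $t\le 1$ one has $H\sim t$ and the $H^{1/K}$ branch is the active one; for $t$ large, $H\sim \log t$ and the linear $H$ branch dominates. Reconciling the two regimes with the single constant $\lambda(K)^{1/2}K^{1+c}$ is exactly what the ``new kind of Bernoulli inequality'' advertised in the abstract is designed for. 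Once this inequality is in place, Theorem \ref{th3} follows by direct substitution.

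The main obstacle is this last Bernoulli step. The dependence $K^{1+c}$ is nonstandard: $c$ enters the logarithm linearly but only interacts with the exponent $1/K$ indirectly through the Schwarz lemma, and matching these two dependencies while keeping the constant as simple as $\lambda(K)^{1/2}K^{1+c}$ in both the small- and large-$\rho$ regimes will require a careful monotonicity analysis of the auxiliary function $t \mapsto \log(1+A\,t^{1/K})/\log(1+t)$ on $(0,\infty)$. The preceding reduction to the hyperbolic Schwarz lemma is by contrast essentially routine, so the genuine work of the paper should lie entirely in this last inequality.
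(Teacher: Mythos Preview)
Your overall architecture matches the paper: rewrite $h_{\mathbb{H}^2,c}$ via $h_{\mathbb{H}^2,c}(x,y)=\log(1+2c\,\sh(\rho/2))$, apply the quasiregular Schwarz lemma $\th(\rho'/2)\le\varphi_{K,2}(\th(\rho/2))$ to bound $\sh(\rho'/2)$ in terms of $s=\sh(\rho/2)$, and finish with a Bernoulli-type inequality; you also correctly locate the real work in that last step.

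The gap is in the Schwarz step. The Hersch--Pfluger bound $\varphi_K(r)\le \lambda(K)^{1-1/K}r^{1/K}$ only controls $\varphi_K(r)$ from above and says nothing about how close $\varphi_K(r)$ is to $1$, so by itself it cannot bound $\varphi_K(r)/\sqrt{1-\varphi_K(r)^2}$. More seriously, you have the two regimes reversed: for \emph{small} $s$ one has $s^{1/K}>s$ and the sharp bound is of order $s^{1/K}$ (not linear), whereas for \emph{large} $s$ the bound is of order $s^K$, not $s^{1/K}$. The paper handles both at once by quoting the known estimate $\eta_K(t)\le\lambda(K)\max\{t^{1/K},t^K\}$ for $\eta_K(t)=\varphi_{K,2}(\sqrt{t/(1+t)})^2\big/\bigl(1-\varphi_{K,2}(\sqrt{t/(1+t)})^2\bigr)$; with $t=s^2$ this gives directly $\sh(\rho'/2)\le\lambda(K)^{1/2}\max\{s^{1/K},s^K\}$. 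After pulling $\lambda(K)^{1/2}$ out of the logarithm via the ordinary Bernoulli inequality \eqref{berineq}, the inequality that remains to be proved is
\[
\log\bigl(1+2c\max\{s^K,s^{1/K}\}\bigr)\le K^{1+c}\max\bigl\{\log(1+2cs),\,(\log(1+2cs))^{1/K}\bigr\},
\]
with the $\max$ on the left essential; this is exactly Theorem~\ref{fuji}. Your formulation, with only a $t^{1/K}$ on the left and with the substitution $t=2cs$ rather than $t=s$, does not match what the Schwarz lemma actually delivers and would not close the argument.
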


\medskip
Theorem   \ref{th3}
is perhaps new also in the case when $K=1, c>1,$ in which case the constant
$ \lambda(K)^{1/2} \,K^{1+c} = 1,$ and the function $f$ is analytic. 
The constant $\lambda(K)$ is explicitly given in \eqref{lam} with concrete estimates.
The proof of Theorem \ref{th3} is based on two main components. The first component
is the well-known Schwarz lemma for quasiregular mappings and associated estimates for special functions expressed in terms of complete elliptic integrals. The second component is a Bernoulli type inequality, formulated as Theorem \ref{fuji} that enables us to simplify the inequalities given by the Schwarz lemma. 


\section{Preliminary results}
%

\begin{nonsec}{\bf Hyperbolic geometry.}\label{hg}
We recall some basic formulas and notation for hyperbolic geometry from \cite{b}.
The hyperbolic metrics of the unit disk ${\mathbb{B}^2}$ and
the upper half plane  ${\mathbb{H}^2}$ are defined, resp., by
\begin{equation}\label{rhoB}
\sh \frac{\rho_{\mathbb{B}^2}(a,b)}{2}=
\frac{|a-b|}{\sqrt{(1-|a|^2)(1-|b|^2)}} ,\quad a,b\in \mathbb{B}^2\,,
\end{equation}
and
\begin{equation}\label{rhoH}
{\rm ch}\rho_{\mathbb{H}^2}(x,y)=1+ \frac{|x-y|^2}{2 {\rm Im}(x) {\rm Im}(y)}
\, ,\quad x,y\in \mathbb{H}^2\,.
\end{equation}

Above the symbols ${\rm sh}$ and ${\rm ch}$ stand for the hyperbolic
sine and cosine functions. Their inverses are  ${\rm arsh}$ and ${\rm arch}.$ 
Recalling that  $2({\rm ch} \,t -1)=(e^{\frac{t}2}-e^{-\frac{t}2})^2$ for $t\geq 0,$
 we obtain
by \eqref{rhoH} and the definition \eqref{hc} 
\begin{equation} \label{hc3}
2\, {\rm sh} \frac{\rho_{\mathbb{H}^2}(x,y)}{2}=\sqrt{2\,({\rm ch} \rho_{\mathbb{H}^2}(x,y)-1)} = \frac{|x-y|}{\sqrt{{\rm Im}(x){\rm Im}(y)}} = \frac{1}{c}\left( e^{h_{{\mathbb{H}^2},c}(x,y)} -1\right)\,.
\end{equation}


These two metrics are M\"obius invariant: if $G, D \in \{ \mathbb{B}^2, \mathbb{H}^2\}$
and $f:G \to D= f(G)$ is a M\"obius transformation, then  
$\rho_G(x,y)= \rho_D(f(x),f(y))$ for all $x,y \in G.$
\end{nonsec}

By the Bernoulli inequality \cite[(5.6)]{hkv} we have for   
$c_1\geq c_2 \geq 1$ and all $t>0$
\begin{equation} \label{berineq}
\log(1+ c_1 t) \leq \frac{c_1}{c_2} \,\log(1+ c_2 t)\,,
\end{equation}
and hence for all $x,y \in D$
\begin{equation}
h_{D,c_1}(x,y) \leq \frac{c_1}{c_2} \, h_{D,c_2}(x,y) \leq \frac{c_1}{c_2} \, h_{D,{c_1}}(x,y)\, .
\end{equation}

\medskip
We record the next three auxiliary results for the proof of Theorem \ref{th2} and
give the proof based on these results.

\noindent

\begin{prop}\label{Lemma1}
For $ c\geq1 $ and $ x\geq 1 $, the following inequality holds
$$
     x \leq c\Big(x-\frac1x\Big)+1.
$$
\end{prop}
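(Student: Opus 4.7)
The inequality is elementary, and my plan is to reduce it to a one-variable monotonicity check. First I would move every term to one side, defining
\[
g(x) \;:=\; c\!\left(x-\tfrac{1}{x}\right)+1-x
\]
so that the claim becomes $g(x)\ge 0$ for all $x\ge 1$ and all $c\ge 1$. The endpoint is free: direct substitution gives $g(1)=c(1-1)+1-1=0$, so it suffices to show that $g$ is non-decreasing on $[1,\infty)$.

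Differentiating, $g'(x)=c\bigl(1+\tfrac{1}{x^2}\bigr)-1$, and since $c\ge 1$ and $1+1/x^2>1$, we have
\[
g'(x)\;\ge\;1+\tfrac{1}{x^2}-1\;=\;\tfrac{1}{x^2}\;>\;0
\]
for every $x\ge 1$. Combined with $g(1)=0$, this yields $g(x)\ge 0$ on $[1,\infty)$, which is the desired inequality.

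As a cross-check I would also record a purely algebraic derivation: multiplying out, the inequality is equivalent to $x(x-1)\le c(x-1)(x+1)$. For $x=1$ both sides vanish; for $x>1$ we may cancel the positive factor $x-1$ to obtain the equivalent form $x\le c(x+1)$, i.e.\ $x(1-c)\le c$, which is immediate from $c\ge 1$ since the left-hand side is non-positive while the right-hand side is positive. There is no real obstacle here; the only mild point to be careful about is that the factor $x-1$ vanishes at $x=1$, so the algebraic reduction must treat that case separately (or, equivalently, one uses the monotonicity argument above which handles the endpoint uniformly).
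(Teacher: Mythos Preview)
Your proof is correct, and your algebraic cross-check is exactly the paper's argument: the paper rewrites the inequality as $x-1\le c(x-1)(x+1)/x$ and notes this is clear for $x\ge 1$, $c\ge 1$, which is your factor-and-cancel step up to multiplication by $x>0$. The preliminary derivative/monotonicity argument is a valid alternative route but more than is needed here.
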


\begin{proof} The claim is equivalent to $x-1 \leq c (x-1)(x+1)/x$ which clearly holds
for $x\geq 1, c\geq 1 .$
\end{proof}

\medskip

\noindent

\begin{lem}\label{Lemma2}
For $ c\geq 1 $, the following function is decreasing for $ x> 1 $,
$$
      f(x)=\frac{\log \Big(1+c\big(x-\frac1x\big)\Big)}{\log x}.
$$
\end{lem}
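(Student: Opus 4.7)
The plan is to establish $f'(x)<0$ on $(1,\infty)$ by reducing to a one-variable sign analysis. Write $f=g/h$ with $g(x)=\log(1+c(x-1/x))$ and $h(x)=\log x$; both vanish at $x=1$ and are positive on $(1,\infty)$. A short computation gives $g'(x)=c(x^2+1)/[x(x+c(x^2-1))]$ and $h'(x)=1/x$, so clearing common positive factors from $f'=(g'h-gh')/h^2$ shows that $\mathrm{sgn}\,f'(x)=\mathrm{sgn}\,P(x)$, where
\begin{equation*}
P(x) := u(x)\log x - \log(1+c(x-1/x)), \qquad u(x) := \frac{c(x^2+1)}{x+c(x^2-1)}.
\end{equation*}
Since $P(1)=0$, the goal becomes $P(x)<0$ for $x>1$.

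The crucial observation is that $P'$ simplifies dramatically. Rewriting the subtracted logarithm as $\log(x+c(x^2-1))-\log x$ and differentiating term by term, the contributions not multiplied by $\log x$ should cancel identically; combining this with the quotient rule for $u$ should leave
\begin{equation*}
P'(x) = u'(x)\log x = \frac{c\,(x^2-4cx-1)\log x}{(x+c(x^2-1))^2}.
\end{equation*}
I expect this algebraic cancellation to be the main obstacle: without noticing it, the derivative looks too unwieldy to analyze, whereas with it the sign of $P'$ on $(1,\infty)$ is exactly the sign of the quadratic $x^2-4cx-1$.

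That quadratic vanishes at $x_0:=2c+\sqrt{4c^2+1}$, so $P$ strictly decreases on $(1,x_0)$ and strictly increases on $(x_0,\infty)$. Combined with $P(1)=0$, this already forces $P<0$ on $(1,x_0]$. To handle $(x_0,\infty)$, I would identify the limit at infinity: $u(x)\to 1$ and $\log(1+c(x-1/x))=\log x+\log c+o(1)$ as $x\to\infty$, so $\lim_{x\to\infty}P(x)=-\log c$. The hypothesis $c\ge 1$ enters decisively here, forcing this limit to be non-positive. Since $P$ increases toward $-\log c\le 0$ on $(x_0,\infty)$, we obtain $P<0$ there as well. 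Combining, $f'<0$ on $(1,\infty)$ as required; the fact that $c\ge 1$ is invoked only at this final step hints that the lemma is sharp at $c=1$.
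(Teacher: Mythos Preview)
Your argument is correct. The algebraic cancellation you anticipate really does occur: with $P(x)=u(x)\log x-\log\bigl(x+c(x^2-1)\bigr)+\log x$ one finds that the non-$\log x$ terms in $P'$ sum to
\[
\frac{c(x^2+1)+x+c(x^2-1)-x(1+2cx)}{x\bigl(x+c(x^2-1)\bigr)}=0,
\]
so $P'(x)=u'(x)\log x$ with $u'(x)=c(x^2-4cx-1)/\bigl(x+c(x^2-1)\bigr)^2$, exactly as you claim. The subsequent two-interval analysis and the limit $P(x)\to -\log c$ are also right, and the conclusion $P<0$ on $(1,\infty)$ follows.

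The paper proceeds differently. It works with the numerator $f_1(x)=c(x^2+1)\log x-\bigl(x+c(x^2-1)\bigr)\log\bigl(1+c(x-\tfrac1x)\bigr)$, which is just $\bigl(x+c(x^2-1)\bigr)P(x)$, and shows directly that $f_1'(x)=2cx\log x-(2cx+1)\log\bigl(1+c(x-\tfrac1x)\bigr)<0$ by invoking the elementary inequality $x\le 1+c(x-\tfrac1x)$ for $c\ge1$, $x\ge1$ (their Proposition~\ref{Lemma1}). That single comparison immediately yields $f_1'<0$, hence $f_1<0$, with no need to locate roots of a quadratic or compute a limit at infinity. Your route is longer but has the merit of making the role of the hypothesis $c\ge1$ completely transparent: it enters only through $\lim_{x\to\infty}P(x)=-\log c\le0$, which nicely displays the borderline behaviour at $c=1$.
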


\medskip

\begin{proof}

Consider the derivative of $ f $,
\begin{align} \notag
   f'(x)& =\dfrac{\frac{c(1+\frac{1}{x^2})}{1+c(x-\frac{1}{x})}\log x
                -\frac{1}{x}\log(1+c(x-\frac{1}{x}))}
              {(\log x)^2} \\ \label{eq:f1}
        &  =\dfrac{c(x^2+1)\log x-(x+c(x^2-1))\log(1+c(x-\frac1x))}
                  {x^2(1+c(x-\frac{1}{x}))(\log x)^2}.
\end{align}
Let $ f_1(x) $ be the numerator of \eqref{eq:f1}.
Since $ x^2(1+c(x-\frac{1}{x}))(\log x)^2>0 $, 
we have to check whether $ f_1(x)<0 $ for $ x>1 $.

Now we have $ f_1(1)=0-0=0 $ and
\begin{equation}\label{eq:f1'}
   f_1'(x)=2cx\log x-(2cx+1)\log\Big(c\big(x-\frac1x\big)+1\Big).
\end{equation}
As $ 1\leq x\leq c(x-\frac1x)+1 $ holds by Proposition \ref{Lemma1},
we have $ f_1'(x)<0 $.

Therefore $ f_1(x)<0 $ holds and the assertion is obtained.
\end{proof}

\medskip

\noindent

\begin{lem} \label{MFprop} For a constant $ c $ with $ c\geq 1 $ and $t\geq 0$, let 
$$ \displaystyle
   F(t)=\log\big(1+c\sqrt{2({\rm ch}\, t -1)}\big) \,. $$
Then $F(0)= 0$ and $F: [0,\infty) \to  [0,\infty)$ is increasing whereas
$ {F(t)}/{t} $ is decreasing for $ t > 0 $.
\end{lem}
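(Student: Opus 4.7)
The plan is to reduce everything to the previous lemma via the substitution $x=e^{t/2}$. First, I would use the identity $2(\ch t-1)=(e^{t/2}-e^{-t/2})^2$, already noted in \ref{hg}, to rewrite
\[
F(t)=\log\!\bigl(1+c\,(e^{t/2}-e^{-t/2})\bigr)=\log\!\bigl(1+2c\sh(t/2)\bigr).
\]
The value $F(0)=\log 1=0$ is then immediate.

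For monotonicity, I would differentiate directly:
\[
F'(t)=\frac{c\,\ch(t/2)}{1+2c\sh(t/2)},
\]
which is strictly positive for $t\ge 0$ since $c\ge 1$ and $\ch(t/2)\ge 1$. Thus $F:[0,\infty)\to[0,\infty)$ is increasing.

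For the decrease of $F(t)/t$, the key observation is that the change of variables $x=e^{t/2}$ (so $t=2\log x$, $x>1$ when $t>0$, and $2\sh(t/2)=x-1/x$) gives
\[
\frac{F(t)}{t}=\frac{\log\!\bigl(1+c(x-\tfrac{1}{x})\bigr)}{2\log x}=\tfrac12 f(x),
\]
where $f$ is precisely the function of Lemma \ref{Lemma2}. Since $x\mapsto e^{t/2}$ is an increasing bijection between $(0,\infty)$ and $(1,\infty)$, and $f$ has been shown to be decreasing on $(1,\infty)$, it follows that $t\mapsto F(t)/t$ is decreasing on $(0,\infty)$.

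The only non-routine step is the monotonicity of $f$, which is the content of Lemma \ref{Lemma2} and is already available; I expect no further obstacle here, so the proof amounts to assembling these ingredients.
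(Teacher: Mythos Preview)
Your proof is correct and follows essentially the same route as the paper: the identity $2(\ch t-1)=(e^{t/2}-e^{-t/2})^2$ together with the substitution $x=e^{t/2}$ reduces the decrease of $F(t)/t$ to Lemma~\ref{Lemma2}, exactly as the authors do. You add an explicit derivative computation for the monotonicity of $F$ itself, which the paper leaves implicit, but otherwise the arguments coincide.
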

\bigskip

\begin{proof}  Because 
$2({\rm ch} \,t -1)=(e^{\frac{t}2}-e^{-\frac{t}2})^2$ for $t\geq 0,$
the function $ F(t)/t $ can be expressed as
$$
   \dfrac{F(t)}{t}=\dfrac{\log\big(1+c\sqrt{2({\rm ch}\, t -1)}\big)}{t}
                  =\dfrac{\log\big(1+c(e^{\frac{t}2}-e^{-\frac{t}2})\big)}{t}\,, \quad t > 0.
$$ 
So, setting $ e^{\frac{t}2}=x $, we have
$$
    \dfrac{F(t)}{t}
      =\dfrac{F(2\log x)}{2\log x}=\dfrac{\log(1+c(x-\frac{1}x))}{2\log x}.
$$
From the monotonicity of the exponential function and Lemma \ref{Lemma2},
the assertion is obtained. 
\end{proof}

 \begin{nonsec}{\bf Proof of Theorem \ref{th2}.} Suppose that
  $g : [0,\infty)\to [0,\infty)$ is an increasing function
with $g(0)=0$ such that $g(t)/t$ is decreasing on $(0,\infty)$. Then by \cite[p.80, Ex 5.24(1)]{hkv} the subadditivity inequality $g(s+t)\le g(s)+g(t)$ holds for $s,t\ge 0$.
We now apply this observation with the function $F$ in place of $g.$ 
By  Lemma \ref{MFprop} we see that the function $F$ satisfies, indeed,
 the above requirements for the function $g.$ 
By   \eqref{rhoH} and the definition \eqref{hc} we can write
\[
h_{\mathbb{H}^2,c}(x,y)= F(\rho_{\mathbb{H}^2}(x,y))\,.
\]
The triangle inequality follows now from the above subadditivity property of the function $g.$
\hfill $ \square $
\end{nonsec}

\medskip

In the next theorem we compare the metric $h_{\mathbb{H}^2,c}$ to the hyperbolic metric. To this end, the following inequality is needed \cite[(4.12)]{hkv}
\begin{equation} \label{hkv412}
2 \log \left(1+ \sqrt{\frac{1}{2}(x-1)} \right)\le {\rm arch}\, x \le 2 \log \left(1+ \sqrt{2(x-1)} \right), \quad x \ge 1\,.
\end{equation}

\medskip

\begin{thm}\label{CompRho} For all $x,y \in \mathbb{H}^2 $
and all $c \ge 1$ we have
$$
 h_{\mathbb{H}^2,c}(x,y)/c \leq \rho_{\mathbb{H}^2}(x,y) \leq 2  h_{\mathbb{H}^2,c}(x,y) \,.
$$
\end{thm}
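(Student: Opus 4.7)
The plan is to work entirely with the scalar relation between $\rho=\rho_{\mathbb{H}^2}(x,y)$ and $h=h_{\mathbb{H}^2,c}(x,y)$ derived in \eqref{hc3}. From \eqref{hc3}, after rearranging, we get
\[
\sqrt{2(\mathrm{ch}\,\rho-1)}=\frac{e^{h}-1}{c},
\]
so the entire problem reduces to a one-variable inequality: apply the two-sided estimate \eqref{hkv412} with $x=\mathrm{ch}\,\rho$, which yields
\[
2\log\!\bigl(1+\tfrac{e^{h}-1}{2c}\bigr)\;\le\;\rho\;\le\;2\log\!\bigl(1+\tfrac{e^{h}-1}{c}\bigr).
\]

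For the upper bound on $\rho$ this is immediate: since $c\ge 1$ we have $(e^{h}-1)/c\le e^{h}-1$, hence $\rho\le 2\log(e^{h})=2h$. The lower bound $h/c\le \rho$ is the step that requires some work; it amounts to showing
\[
2c\log\!\Bigl(1+\frac{e^{h}-1}{2c}\Bigr)\ge h.
\]
This is exactly where I would invoke the Bernoulli inequality \eqref{berineq} already recorded in the paper, with $c_{1}=2c\ge 1$, $c_{2}=1$, and $t=(e^{h}-1)/(2c)$: the inequality $\log(1+c_{1}t)\le (c_{1}/c_{2})\log(1+c_{2}t)$ becomes $\log(e^{h})\le 2c\log\bigl(1+(e^{h}-1)/(2c)\bigr)$, which is precisely what is needed.

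The only mildly nontrivial point is recognising that the Bernoulli inequality \eqref{berineq}, stated earlier in the paper for the logarithm, is the right tool to handle the lower bound; the upper bound is essentially free from the monotonicity in $c$. Combining both estimates gives the asserted chain $h/c\le\rho\le 2h$, completing the proof.
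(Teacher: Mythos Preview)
Your proof is correct and follows essentially the same route as the paper: both arguments substitute the relation \eqref{hc3} into the two-sided estimate \eqref{hkv412} to obtain $2\log(1+(e^{h}-1)/(2c))\le\rho\le 2\log(1+(e^{h}-1)/c)$, then use the Bernoulli inequality \eqref{berineq} for the lower bound and the monotonicity in $c$ for the upper bound. The only cosmetic difference is that the paper phrases the upper bound as a reduction to the case $c=1$, which is exactly your inequality $(e^{h}-1)/c\le e^{h}-1$ in disguise.
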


\begin{proof}
For convenience of notation write $D= \mathbb{H}^2.$ 
Fix $x,y \in D$ and $c \ge 1\,.$ Applying \eqref{hc3}, \eqref{hkv412}, 
and the Bernoulli inequality \eqref{berineq} we now have
\[
\rho_D(x,y)= {\rm arch}(1+ \frac{1}{2c^2}(e^{h_{D,c}(x,y)}-1)^2)\ge 2 \log(1+ \frac{1}{2c}(e^{h_{D,c}(x,y)}-1)) \ge\]
\[
\frac{1}{c} \log(1+ (e^{h_{D,c}(x,y)}-1))=h_{D,c}(x,y)/c
\]
which proves the first inequality.

To prove the second inequality, it is enough to prove the case $c=1$ because 
\( h_{D,1}(x,y) \le h_{D,c}(x,y)\) for \( c \ge 1 \,.\) Again by \eqref{hkv412}
\[
\rho_D(x,y)= {\rm arch}(1+ \frac{1}{2}(e^{h_{D,1}(x,y)}-1)^2)\le 2 \log(1+ (e^{h_{D,1}(x,y)}-1)) = 2 \, h_{D,1}(x,y)\]
completing the proof.
\end{proof}


\section{ A Bernoulli type inequality}
The following theorem yields a Bernoulli type inequality 
which is one of the main steps in the proof of Theorem \ref{th3}.
The proof of the Bernoulli type inequality is based on three lemmas, 
Lemmas \ref{LemmaA} A, \ref{LemmaB} B, and 
\ref{LemmaC} C, formulated below.

\noindent
\begin{thm}\label{fuji} The following inequality
$$
  \log\big(1+2c\max\big\{t^K,t^{1/K}\big\}\big)\leq
     K^{1+c}\max\big\{\log(1+2ct),\big(\log(1+2ct)\big)^{1/K}\big\}
$$
holds for $ c\geq 1, \ K\geq 1,\ t>0 $.
\end{thm}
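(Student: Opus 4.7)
The plan is to split the proof into three cases depending on which branch of each $\max$ is realized; these will presumably correspond to Lemmas~\ref{LemmaA}, \ref{LemmaB}, and \ref{LemmaC}. Since $K\geq 1$, $\max\{t^K,t^{1/K}\}$ equals $t^K$ when $t\geq 1$ and $t^{1/K}$ when $t\leq 1$. Writing $y=\log(1+2ct)$, $\max\{y,y^{1/K}\}$ equals $y$ when $y\geq 1$ and $y^{1/K}$ when $y\leq 1$. Since $t\geq 1$ and $c\geq 1$ force $y\geq\log 3>1$, the problem decomposes into Case~A ($t\geq 1$), Case~B ($(e-1)/(2c)\leq t\leq 1$, so $y\geq 1$), and Case~C ($0<t\leq(e-1)/(2c)$, so $y\leq 1$).

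For Case~A, I would first establish the polynomial inequality $(1+v)^K\geq 1+v^K$ for $v\geq 0$, $K\geq 1$ by noting equality at $v=0$ and checking the sign of the derivative. Applied to $v=2ct$, this gives $\log(1+(2ct)^K)\leq K\log(1+2ct)$. Since $c\geq 1$ forces $(2c)^{K-1}\geq 1$ and hence $2ct^K\leq(2ct)^K$, monotonicity of $\log$ yields
\[
\log(1+2ct^K)\leq\log(1+(2ct)^K)\leq K\log(1+2ct)\leq K^{1+c}\log(1+2ct),
\]
completing this case.

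For Case~B, I would apply the classical Bernoulli inequality $(1+x)^\alpha\geq 1+\alpha x$ (for $\alpha\geq 1$, $x\geq 0$) with $\alpha=K^{1+c}$ and $x=2ct$, which yields
\[
K^{1+c}\log(1+2ct)\geq\log(1+2ct\,K^{1+c}).
\]
It then suffices to show $2ct\,K^{1+c}\geq 2ct^{1/K}$, equivalently $t^{(K-1)/K}K^{1+c}\geq 1$. Exploiting the Case~B lower bound $t\geq(e-1)/(2c)$, this collapses to verifying the one-variable inequality $K^{1+c}\geq (2c/(e-1))^{(K-1)/K}$, which holds with equality at $K=1$ and for which a derivative comparison using $1+c>\log(2c)-\log(e-1)$ handles the rest.

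Case~C is where the real difficulty lies, and I expect it to be the main obstacle. The required inequality $\log(1+2ct^{1/K})\leq K^{1+c}(\log(1+2ct))^{1/K}$ degenerates to an \emph{exact equality} at $K=1$ for every admissible $t$, so any Bernoulli-type estimate that introduces slack at $K=1$ (for instance, replacing $\log(1+u)$ by $u$) is irreparably too weak. My approach would be the substitution $y=\log(1+2ct)\in(0,1]$, which rewrites the inequality as
\[
\log\bigl(1+(e^y-1)^{1/K}(2c)^{1-1/K}\bigr)\leq K^{1+c}\,y^{1/K},
\]
with both sides equal to $y$ at $K=1$. The task then reduces to showing that as $K$ increases from $1$, the right-hand side grows at least as fast as the left-hand side, which leads to a delicate $K$-derivative comparison in which $c$, $y$, and $K$ all interact non-separably. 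Preserving the tight logarithmic structure at $K=1$ while extracting the bound uniformly in $c\geq 1$ and $y\in(0,1]$ is the technical core that Lemma~\ref{LemmaC} will presumably supply.
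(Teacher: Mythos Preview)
Your three--range decomposition is exactly the one the paper uses, though your labels A and C are swapped: in the paper, Case~A (and Lemma~A) is the small--$t$ range $0<t\le (e-1)/(2c)$, Case~C (and Lemma~C) is $t\ge 1$. Your treatment of $t\ge 1$ via $(1+v)^K\ge 1+v^K$ and $2ct^K\le (2ct)^K$ is essentially the paper's Lemma~C argument. For the middle range your Bernoulli step $(1+2ct)^{K^{1+c}}\ge 1+2ct\,K^{1+c}$ together with the one--variable check $K^{1+c}\ge (2c/(e-1))^{(K-1)/K}$ is a nice alternative to the paper's Lemma~B, which instead differentiates $B_2(K)=K^{1+c}\log(1+2ct)-\log(1+2ct^{1/K})$ in $K$ directly; your route is shorter and avoids the auxiliary minimisation of $t^{1/K}\log t$.

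The genuine gap is the hard small--$t$ case, which you do not prove. You propose to differentiate in $K$ after the substitution $y=\log(1+2ct)$, but you neither carry this out nor indicate what controls the resulting derivative; and the lemma you invoke (Lemma~C) is the $t\ge 1$ statement, not the relevant one. The paper handles this range quite differently: it fixes $c,K$ and differentiates
\[
a(t)=K^{1+c}\bigl(\log(1+2ct)\bigr)^{1/K}-\log\bigl(1+2ct^{1/K}\bigr)
\]
in $t$, noting $a(0)=0$. After clearing positive factors, the sign of $a'(t)$ reduces to showing
\[
\bigl(K^{1+c}\bigr)^{K/(K-1)}\,t>\log(1+2ct),
\]
and since $\log(1+2ct)\le 2ct$, this follows from the purely $(K,c)$--inequality of Lemma~A,
\[
\bigl(K^{1+c}\bigr)^{K/(K-1)}>2c,
\]
proved via monotonicity in $K$ together with $\lim_{K\to 1}(K^{1+c})^{K/(K-1)}=e^{1+c}>2c$. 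So the key idea you are missing is to differentiate in $t$ rather than in $K$, which cleanly separates the variables and isolates the single sharp constant inequality above.
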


\medskip

It will be shown in Remark \ref{rmk310} that Theorem \ref{fuji} is sharp in the sense that $K^{1+c}$ cannot be replaced with $K^2.$
\noindent
\begin{lem}{\bf  A.} \label{LemmaA}\quad
For $ c\geq 1,\ K\geq 1 $,
$$
    \big(K^{1+c}\big)^{\frac{K}{K-1}}-2c>0.
$$
\end{lem}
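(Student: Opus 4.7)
The plan is to take logarithms and separate the inequality into two independent pieces. For $K>1$ we may write
\[
\bigl(K^{1+c}\bigr)^{K/(K-1)} = \exp\bigl((1+c)\,h(K)\bigr), \qquad h(K) := \frac{K\log K}{K-1},
\]
(the value at $K=1$ being understood as the limit $e^{1+c}$), so the assertion reduces to showing $\exp\bigl((1+c)h(K)\bigr) > 2c$.

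First I would show that $h$ is strictly increasing on $(1,\infty)$ with $\lim_{K\to 1^+}h(K)=1$. A direct computation yields
\[
h'(K) \;=\; \frac{K-1-\log K}{(K-1)^{2}},
\]
and the standard estimate $\log K < K-1$ for $K>1$ gives $h'(K)>0$; the limit follows from l'H\^opital's rule or a short Taylor expansion of $K\log K$ about $K=1$. Consequently $h(K)\ge 1$ on $[1,\infty)$, and therefore $\bigl(K^{1+c}\bigr)^{K/(K-1)} \ge e^{1+c}$ for every $K\ge 1$.

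It then suffices to verify the elementary inequality $e^{1+c} > 2c$ for $c\ge 1$. Setting $\psi(c):=e^{1+c}-2c$, one has $\psi'(c)=e^{1+c}-2>0$ on $[1,\infty)$ since $e^{2}>2$, so $\psi$ is strictly increasing there; combined with $\psi(1)=e^{2}-2>0$ this gives $\psi(c)>0$ throughout. Chaining the two bounds produces $(K^{1+c})^{K/(K-1)}-2c \ge e^{1+c}-2c > 0$, as required. The only step requiring a brief calculation is the monotonicity of $h$; no genuine obstacle arises.
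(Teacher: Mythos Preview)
Your proof is correct and follows essentially the same route as the paper: both arguments establish monotonicity in $K$ via the inequality $K-1>\log K$, evaluate the limiting value at $K=1$ as $e^{1+c}$ by l'H\^opital, and then check $e^{1+c}>2c$. The only cosmetic difference is that you isolate the exponent $h(K)=\tfrac{K\log K}{K-1}$ and show $h(K)\ge 1$, whereas the paper differentiates the full expression $A(K)=(K^{1+c})^{K/(K-1)}-2c$ directly; since $A'(K)=(1+c)\,e^{(1+c)h(K)}h'(K)$, the two computations are equivalent.
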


\medskip

\begin{proof} Let $ A(K)=\big(K^{1+c}\big)^{\frac{K}{K-1}}-2c $.
Differentiation yields
$$
   A'(K)=\big(K^{1+c}\big)^{\frac{K}{K-1}}\frac{1}{(K-1)^2}
      (1+c)\big((K-1)-\log K\big).
$$
Since $ K\geq 1 $, the inequality $ (K-1)-\log K>0 $ holds,
and we have $ A'(K)>0 $.

Next, we will check $ A(1)>0 $.
Set $ y=\big(K^{1+c}\big)^{\frac{K}{K-1}} $ and consider
$ \log y=\frac{K}{K-1}(1+c)\log K $.
Then,
$$
     \lim_{K\to 1} \log y
    = \lim_{K\to 1} (1+c)\frac{K\log K}{K-1}
    = \lim_{K\to 1} (1+c)\frac{(K\log K)'}{(K-1)'}
    = \lim_{K\to 1} (1+c)\frac{\log K+1}{1}
    =1+c. 
$$
So we have
$$
          \lim_{K\to 1}  y
         =\lim_{K\to 1} \big(K^{1+c}\big)^{\frac{K}{K-1}}=e^{1+c}.
$$
Here, for $ c>1 $, the inequality $ e^{1+c}-2c>0 $ holds.
Therefore, 
$$
     A(1)=\lim_{K\to 1}\big(K^{1+c}\big)^{\frac{K}{K-1}}-2c
         =e^{1+c}-2c>0.
$$
As  we have $ A(1)>0 $ and $ A'(K)>0 $ for $ K>1 $,
the assertion $ A(K)> 0 $ is obtained.
\end{proof}

\medskip

\begin{lem} {\bf  B.} \label{LemmaB} (1)
For $  K\geq 1$  and $t>0 $, the function
$$
   B_1(t)=t^{1/K}\log t
$$
attains its minimum $ -\frac{K}e $ at $ t=e^{-K}$.

(2) For $ K\geq 1, c\geq 1 $ and $ \dfrac{e-1}{2c}\leq t< 1 $,
the following holds
$$
  K^{1+c}\log(1+2ct)-\log(1+2ct^{1/K})>0.
$$

\end{lem}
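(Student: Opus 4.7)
\medskip

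Part (1) is a direct calculus computation. Differentiating gives
\[
B_1'(t) = t^{1/K-1}\left(\frac{\log t}{K} + 1\right),
\]
so the unique critical point in $(0,\infty)$ is $t_0 = e^{-K}$. The factor $t^{1/K-1}$ is positive throughout, while $(\log t)/K+1$ changes sign from negative to positive at $t_0$; hence $B_1$ is strictly decreasing on $(0,t_0)$ and strictly increasing on $(t_0,\infty)$. Direct substitution yields $B_1(e^{-K}) = e^{-1}\cdot(-K) = -K/e$, and since $B_1(t) \to 0$ as $t \to 0^+$, this value is the global minimum on $(0,\infty)$.

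For part (2), I will work under the implicit assumption $K > 1$ (the case $K=1$ reduces to the equality $0=0$, not a strict inequality). The plan is to reduce the claim to Lemma \ref{LemmaA} by means of a concavity bound on the function $\phi(x) = \log(1+2cx)$. Since $\phi$ is strictly concave on $[0,\infty)$ and vanishes at $0$, the quotient $\phi(x)/x$ is strictly decreasing on $(0,\infty)$ (a standard consequence of writing $s=(s/t)\,t + (1-s/t)\,0$ and applying concavity). For $t \in (0,1)$ and $K > 1$, one has $t < t^{1/K}$, so this monotonicity yields
\[
\frac{\log(1+2ct^{1/K})}{t^{1/K}} < \frac{\log(1+2ct)}{t},
\]
which rearranges to the key bound
\[
\log(1+2ct^{1/K}) < t^{1/K-1}\log(1+2ct).
\]
Because $\log(1+2ct) > 0$, it therefore suffices to establish the pointwise inequality $K^{1+c} \ge t^{1/K-1}$ for all $t \in [(e-1)/(2c),\,1)$.

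Since $1/K - 1 < 0$, this last inequality is equivalent to $(K^{1+c})^{K/(K-1)} \ge 1/t$. Using the hypothesis $t \ge (e-1)/(2c)$ gives $1/t \le 2c/(e-1) < 2c$, where the strict inequality uses $e-1 > 1$. Hence it is enough to verify the stronger bound $(K^{1+c})^{K/(K-1)} > 2c$, which is exactly the content of Lemma \ref{LemmaA}. Chaining these inequalities with the concavity bound produces the desired strict inequality.

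The main obstacle is spotting the concavity bound $\log(1+2ct^{1/K}) < t^{1/K-1}\log(1+2ct)$; once that is in place, Lemma \ref{LemmaA} supplies precisely the right strength, and the hypothesis $t \ge (e-1)/(2c)$ enters only to convert $1/t$ into a quantity controlled by $2c$. The slack between $2c$ and $2c/(e-1)$ (coming from $e-1 > 1$) shows that the range for $t$ in the statement is not tight: the proof would go through whenever $t \ge 1/(K^{1+c})^{K/(K-1)}$, which, by Lemma \ref{LemmaA}, is weaker than $t \ge 1/(2c)$ and hence weaker than $t \ge (e-1)/(2c)$.
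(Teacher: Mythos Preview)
Your proof is correct. Part~(1) matches the paper's argument. Part~(2), however, takes a genuinely different route.

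The paper fixes $c$ and $t$, treats $B_2(K)=K^{1+c}\log(1+2ct)-\log(1+2ct^{1/K})$ as a function of $K$, and shows $\partial B_2/\partial K>0$ with $B_2(1)=0$. The bound on the derivative uses Part~(1) (to control $t^{1/K}\log t\ge -K/e$) together with the hypothesis $t\ge(e-1)/(2c)$, which is invoked twice: once as $\log(1+2ct)\ge1$ and once as $1+2ct^{1/K}\ge e$. Lemma~\ref{LemmaA} is not used there at all.

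Your argument instead exploits the concavity of $x\mapsto\log(1+2cx)$ to get $\log(1+2ct^{1/K})<t^{1/K-1}\log(1+2ct)$ directly, reducing the claim to the purely algebraic inequality $K^{1+c}\ge t^{1/K-1}$, which you then deduce from Lemma~\ref{LemmaA}. This bypasses Part~(1) entirely, reveals why Lemma~\ref{LemmaA} is the natural companion statement, and, as you note, shows that the lower bound $t\ge(e-1)/(2c)$ can be relaxed to $t\ge 1/(2c)$ (indeed to $t\ge (K^{1+c})^{-K/(K-1)}$). The paper's approach, on the other hand, gives Part~(1) an immediate application and keeps the analysis self-contained within Lemma~\ref{LemmaB}. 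Your remark that $K=1$ yields equality rather than strict inequality is also accurate; the paper's proof establishes the strict inequality only for $K>1$ as well.
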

\medskip

\noindent
\begin{proof} (1)
Consider the equation
$$
    B_1'(t)=\frac1Kt^{\frac1K-1}\log t+t^{1/K}\frac{1}t
         =t^{\frac1K-1}\Big(\frac1K\log t+1\Big)=0.
$$
Because $ t>0 $, we have $ \log t=-K $ that is $ t=e^{-K} $.
So $ B_1(t) $ attains the minimum at $ t=e^{-K} $.
Moreover, the minimum value is given by 
$$
    B_1(e^{-K})=(e^{-K})^{1/K}\log e^{-K}=-\frac{K}{e}.
$$

(2) Let $ B_2(K)= K^{1+c}\log(1+2ct)-\log(1+2ct^{1/K})>0. $
For each fixed $ c,t $ with $ c\geq 1 $, $ \dfrac{e-1}{2c}\leq t< 1 $,
we will show $ B_2(K)>0 $.
Observe that $ \log(1+2ct)>1 $ holds in this case.

Differentiation and part (1) yield
\begin{align*}
  \frac{\partial}{\partial K}  B_2(K)
     & =\frac{2ct^{1/K}\log t}{K^2(1+2ct^{1/K})}
        +(1+c)K^c\log(1+2ct) \\
     & \geq \frac{2c\big(-\frac{K}e\big)}{K^2(1+2ct^{1/K})}
        +(1+c)K^c\log(1+2ct) \qquad \textrm{(from part (1))}\\
     & \geq -\frac{2c}{Ke(1+2ct^{1/K})}+(1+c)K^c 
                \qquad (\textrm{from } \log(1+2ct)>1)\\
     & \geq -\frac{2c}{e^2}+(1+c) \qquad 
                  (\textrm{from }K^c>K\geq1,\ 1+2ct^{1/K}\geq1+2ct\geq e)\\
     & \geq -c+1+c>0.
\end{align*}
Moreover, 
$$
   B_2(1)=1\cdot\log(1+2ct)-\log(1+2ct^1)=0.
$$
As  we have $ B_2(1)=0 $ and $ \dfrac{\partial}{\partial K}B_2(K)>0 $ 
for $ K\geq1 $,
the assertion $ B_2(K)> 0 $ is obtained.

\end{proof}

\noindent
\begin{lem}
{\bf C.} \label{LemmaC}\quad
For $ K\geq 1, c\geq 1,\ t\geq 1 $, the function
$$
    C(K)= (1+2ct)^{K}-(1+2ct^K)
$$
is increasing.
\end{lem}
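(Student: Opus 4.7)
The plan is to show that $C$ is increasing on $[1,\infty)$ by checking that its derivative in $K$ is nonnegative. A direct differentiation gives
$$
C'(K) \;=\; (1+2ct)^{K}\log(1+2ct) \;-\; 2c\, t^{K}\log t,
$$
so the entire proof reduces to establishing the pointwise inequality
$$
(1+2ct)^{K}\log(1+2ct)\;\ge\;2c\, t^{K}\log t \qquad (K\ge 1,\ c\ge 1,\ t\ge 1).
$$

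I would attack the two factors on the left separately, using only elementary monotonicity. Since $1+2ct\ge 2ct$ and $x\mapsto x^{K}$ is increasing for $K\ge 1$, we obtain $(1+2ct)^{K}\ge (2ct)^{K}=(2c)^{K}t^{K}$. For the logarithmic factor, $\log(1+2ct)>\log(2ct)=\log(2c)+\log t\ge \log t$, because $c\ge 1$ forces $\log(2c)\ge \log 2>0$. Multiplying the two bounds and pulling out a single factor of $2c$ gives
$$
(1+2ct)^{K}\log(1+2ct)\;\ge\;(2c)^{K}t^{K}\log t\;=\;2c\cdot (2c)^{K-1}\cdot t^{K}\log t.
$$
Since $(2c)^{K-1}\ge 1$ (from $K\ge 1$, $c\ge 1$) and $t^{K}\log t\ge 0$ (from $t\ge 1$), the right-hand side is at least $2c\, t^{K}\log t$, which finishes the argument.

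No real obstacle is anticipated: the proof amounts to the composition of two one-line estimates. The precise role of the hypothesis $t\ge 1$ is to keep $\log t\ge 0$, so that discarding the nonnegative excess factor $(2c)^{K-1}-1$ is legal; and the hypothesis $c\ge 1$ is what makes $\log(2c)\ge 0$, so that the step $\log(2ct)\ge \log t$ is free. The degenerate case $t=1$ is trivial, since then $\log t=0$ and the desired inequality reduces to $(1+2c)^{K}\log(1+2c)\ge 0$.
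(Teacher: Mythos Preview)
Your proof is correct and follows essentially the same approach as the paper: differentiate in $K$ and then bound the two factors $(1+2ct)^K$ and $\log(1+2ct)$ from below by $2ct^K$ and $\log t$, respectively, using only $K\ge 1$, $c\ge 1$, $t\ge 1$. The only cosmetic difference is that the paper reaches $(1+2ct)^K>2ct^K$ via the intermediate step $(1+2ct)^K>1+(2ct)^K$, whereas you go through $(1+2ct)^K\ge(2c)^K t^K$ and then discard the extra factor $(2c)^{K-1}\ge 1$; both routes are equivalent one-line estimates.
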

\medskip

\begin{proof} The derivative is
$$
    \frac{\partial}{\partial K} C(K)
        = (1+2ct)^K\log(1+2ct)-2ct^K\log t.
$$
Observing that for  $K \geq 1, t\geq 1, c\geq1 $, the following inequalites hold
$$
   (1+2ct)^K > 1+(2ct)^K > 2ct^K, \quad \textrm{and}\quad
   1+2ct> t
$$
we have
$$
   \frac{\partial}{\partial K}  C(K)  
       > (1+2ct)^K\log(1+2ct)-(1+2ct)^K\log t
       > 0
$$
and hence $C(K)$ is increasing for $K\geq 1.$
\end{proof}

\bigskip

\noindent

\begin{nonsec}{\bf Proof of Theorem \ref{fuji}}.
For $K \geq 1, c\geq 1 $ and $ t>0 $, the following relations hold
\begin{equation}\label{eq:max1}
   \max\{t^K,t^{1/K}\}=\left\{
       \begin{array}{ll}
            t^K & (t>1)\\
            t^{1/K} & (0<t\leq 1)
       \end{array} 
       \right.
\end{equation}
and
\end{nonsec}
\begin{equation}\label{eq:max2}
   \max\big\{\log(1+2ct),\big(\log(1+2ct)\big)^{1/K}\big\}=\left\{
       \begin{array}{ll}
            \log(1+2ct) & (t>\frac{e-1}{2c})\\
            \big(\log(1+2ct)\big)^{1/K} & (0<t\leq \frac{e-1}{2c}).
       \end{array} 
       \right.
\end{equation}
Because $c \geq 1,$ we have  $ 0<\frac{e-1}{2c}<1 $.
Therefore, to prove Theorem \ref{fuji}, it is sufficient to show the following inequalities
A,B and C.
\begin{enumerate}
  \item[\bf A: ] For $ c\geq 1, \ 0<t\leq\frac{e-1}{2c},\ K\geq 1 $, 
      the following inequality holds
      $$
          \log(1+2ct^{1/K})\leq K^{1+c}\big(\log(1+2ct)\big)^{1/K}.
      $$

  \item[\bf B: ] For $ c\geq 1, \ \frac{e-1}{2c}<t<1,\ K\geq 1 $, 
      the following inequality holds
      $$
          \log(1+2ct^{1/K})\leq K^{1+c} \,\log(1+2ct)\,.
      $$

  \item[\bf C: ] For $ c\geq 1, \ t\geq 1,\ K\geq 1 $, the following inequality holds
      $$
          \log(1+2ct^{K})\leq K^{1+c}\, \log(1+2ct)\,.
      $$
\end{enumerate}

Accordingly, we consider these three cases A,B,C.

\medskip

\begin{enumerate}

  \item[{\bf A: }]
     Fix $ c,K $ with $ c\geq 1  $, $ K\geq1 .$ 
     In this case, we remark that $ 0<\log(1+2ct)\leq1 $
     as $ 0<t\leq\frac{e-1}{2c} $.
     Let 
     \begin{equation}\label{eq:A}
       a(t):= K^{1+c}\big(\log(1+2ct)\big)^{1/K} -\log(1+2ct^{1/K}).
     \end{equation}
     Here, we will show  that $ a(t)\geq 0 $ holds.
     First, we remark that $ a(0)=K^{1+c}\log(1)-\log(1)=0 $.
     Next, we obtain
     \begin{equation}\label{eq:A'}
        a'(t)=K^{1+c}\cdot\frac{1}{K}\cdot
            \frac{2c\big(\log(1+2ct)\big)^{\frac{1}{K}-1}}
                                    {1+2ct}
              -\frac{1}{K}\frac{2ct^{\frac1K-1}}{1+2ct^{\frac1K}}.
     \end{equation}
     So, if we can show $ a'(t)\geq0 $ for $ 0<t\leq\frac{e-1}{2c}$,
     we obtain the assertion.
     Dividing the right side of \eqref{eq:A'} by $ \dfrac1K(2c)>0 $,
     we have
     $$
       A_0:=\frac{K}{2c}a'(t)
           =K^{1+c}\frac{\big(\log(1+2ct)\big)^{\frac{1}{K}-1}}{1+2ct}
              -\frac{t^{\frac1K-1}}{1+2ct^{\frac1K}}.
     $$
     Multiplying the above constant $ A_0 $ by $ (1+2ct)(1+2ct^{\frac1K})>0 $ we have
     $$
       A_1:=(1+2ct)(1+2ct^{\frac1K})A_0
            =K^{1+c}(1+2ct^{\frac1K})\big(\log(1+2ct)\big)^{\frac{1}{K}-1}
              -t^{\frac1K-1}(1+2ct).
     $$
     Here, remark that $ 1+2ct^{\frac1K}\geq 1+2ct $ holds
     because $ 0<t < \frac{e-1}{2c} <1, \ K>1 $. 
     Therefore,
     $$
       A_1>(1+2ct)\Big(K^{1+c}\big(\log(1+2ct)\big)^{\frac1K-1}-t^{\frac1K-1}\Big).
     $$ 
     Multiplying the above constant $ A_1 $ by 
     $ \frac{1}{1+2ct}\big(t\log(1+2ct)\big)^{\frac{K-1}{K}}>0 $,
     $$
        A_2:=K^{1+c}t^{\frac{K-1}K}-\big(\log(1+2ct)\big)^{\frac{K-1}{K}}.
     $$ 
     To show $ A_2>0 $, it is sufficient to check
     $ \Big(K^{1+c}\Big)^{\frac{K}{K-1}}t>\log(1+2ct) $.
     Because $ X-\log(X+1)>0 $ holds for $ X>0$, we have by Lemma \ref{LemmaA} A 
     $$ 
       \Big(K^{1+c}\Big)^{\frac{K}{K-1}}t-\log(1+2ct)
       \geq\Big(K^{1+c}\Big)^{\frac{K}{K-1}}t-2ct
         =t\Big(\big(K^{1+c}\big)^{\frac{K}{K-1}}-2c\Big)>0.
     $$
     Therefore, for $ 0<t<\frac{e-1}{2c} $,
     $ A_2>0$ and $ a'(t)>0 $ hold.
      Hence, 
      $$  
          \log\big(1+2ct^{1/K}\big) \leq K^{1+c}\log(1+2ct)^{1/K}
      $$
      is obtained in this case.

  \item[{\bf B: }]
     Fix $K, c $ and $t$ with $K \geq 1, c\geq 1 $ and $ \frac{e-1}{2c}<t<1 $.
     In this case, we remark that $ \log(1+2ct)>1 $. 
     By Lemma \ref{LemmaB} B (2) we see that 
    the desired inequality holds.

  \item[{\bf C: }]
     In this case, we remark that $ \log(1+2ct)>1 $.
     By Lemma \ref{LemmaC} C, the following inequality holds
     $$
       (1+2ct)^{K}-(1+2ct^{K}) \geq 0 $$
       which implies
$$        \log\big(1+2ct^{K}\big)  \leq K \,\log(1+2ct) \,.
     $$
    This inequality yields the claim C with the better constant $K$ in place of $K^{1+c}.$
     
 \end{enumerate}
In conclusion, the proof of Theorem \ref{fuji} is complete. \hfill $ \square $

\medskip

\begin{nonsec}{\bf Remark.}\label{rmk310} For $ K=1.2, c= 5,$ and $t=0.001,$ the following inequality
does not hold
\[    \log(1+2c \max\{t^K,t^{1/K}\})
          \le K^2 \max\{ \log(1+2ct),(\log(1+2ct))^{1/K}\}. \]
In conclusion, Theorem \ref{fuji} is sharp in the sense that the constant $K^{1+c}$  can not be replaced with $K^2\,.$     
%
\end{nonsec}

\section{Proof of Theorem \ref{th3}}

We first recapitulate some fundamental facts
about $K$-quasiregular mappings needed for the sequel. The definition of these mappings  is given in \cite[pp. 288-289]{hkv}, \cite[pp. 10-11]{r}. 
For the proof of the main result, Theorem \ref{th3}, we need some
properties of an increasing homeomorphism $\varphi_{K,2}:[0,1]\to[0,1]$, \cite[(9.13), p. 167]{hkv} 
\begin{align*}
\varphi_{K,2}(r)=\frac{1}{\gamma_2^{-1}(K\gamma_2(1\slash r))} =\mu^{-1}(\mu(r)/K),\quad
0<r<1,\,K>0.
\end{align*}
This function is the special function of the Schwarz
lemma in \cite[Thm 16.2]{hkv}, and its properties and estimates are 
crucial for the proof.
The Gr\"otzsch capacity, a decreasing homeomorphism $\gamma_2:(1,\infty)\to (0,\infty)$, is defined as \cite[(7.18), p. 122]{hkv}
\begin{align*}
\gamma_2(1/r)=\frac{2\pi}{\mu(r)},\quad \mu(r)=\frac{\pi}{2}\frac{\K(\sqrt{1-r^2})}{\K(r)},\quad
\K(r)=\int^1_0 \frac{dx}{\sqrt{(1-x^2)(1-r^2x^2)}}
\end{align*}
with $0<r<1$. Denote then \cite[(10.4), p. 203]{avv}
\begin{align} \label{lam}
\lambda(K)=\left(\frac{\varphi_{K,2}(1/\sqrt{2})}{\varphi_{1/K,2}(1/\sqrt{2})}\right)^2,\quad K>1.   
\end{align}
Trivially, $\lambda(1)=1$ and, 
 by \cite[Thm 10.35, p. 219]{avv},
$\lambda(K)\in [1,e^{\pi(K-1/K))}$ for $K\ge 1$. The function
$\varphi_{K,2}$ satisfies many identities and inequalities
as shown in \cite[Section 10]{avv}. In particular, for $t>0, K\ge 1,$ we have
by \cite[(10.3), 10.24]{avv}
\begin{equation}
\label{eta1}
\eta_K(t) \equiv \frac{\varphi_{K,2}(\sqrt{t/(1+t)})^2}{1- \varphi_{K,2}(\sqrt{t/(1+t)})^2}  \le \lambda(K) \, \max\{  t^{1/K}, t^K\} \,.
\end{equation}
Setting $\sqrt{t/(1+t)} = \th (u/2)$ we have $t=\sh^2 (u/2)$ and the above inequality
\eqref{eta1}
attains the following form
\begin{equation}
\label{eta2}
\eta_K(\sh^2 (u/2)) \equiv \frac{\varphi_{K,2}(\th (u/2))^2}{1- \varphi_{K,2}(\th (u/2))^2}  \le \lambda(K) \, \max\{  (\sh^2 (u/2))^{1/K}, (\sh^2 (u/2))^K\} \,.
\end{equation}

 \begin{nonsec}{\bf Proof of Theorem \ref{th3}.}   \end{nonsec} 
For short we write $\mathbb{H}$ in place of $ \mathbb{H}^2$
 and fix  $x,y \in \mathbb{H}$  and  
 $\rho= \rho_{ \mathbb{H}}(x,y), \, \rho'= \rho_{ \mathbb{H}}(f(x),f(y)).$ 
 Now by   \eqref{hc3} we obtain for $c>0$
 \[ \displaystyle
 h_{ \mathbb{H},c}(f(x),f(y)) = \log(1+ 2 c \,{\rm sh} \frac{\rho'}{2}) 
= \log (1+ 2 c\, \frac{ {{\rm th}\frac{\rho'}{2}}}{{\sqrt{1- {\rm th}^2\frac{\rho'}{2}}}} )
\]
and by the Schwarz lemma for quasiregular mappings \cite[16.2]{hkv}
\[
 h_{ \mathbb{H},c}(f(x),f(y))
\le  \log (1+ 2 c \frac{\varphi_{K,2}({\rm th}
 \frac{\rho}{2})}{\sqrt{1- \varphi_{K,2}({\rm th}\frac{\rho}{2})^2}} ) \,.
\]
By \eqref{eta2} we have
\[
  \log \big(1+ 2 c \frac{\varphi_{K,2}({\rm th}\frac{\rho}{2})}{\sqrt{1- \varphi_{K,2}({\rm th}\frac{\rho}{2})^2}} \big) \le \log \left(1+ 2 c \, \lambda(K)^{1/2}\,
\max \{  ({\rm sh} \frac{\rho}{2})^{1/K},  ({\rm sh} \frac{\rho}{2})^{K} \} \right)
\] 
and further by the Bernoulli inequality \eqref{berineq}
\[
\le \lambda(K)^{1/2}\, \log \left(1+ 2 c \, 
\max \{  ({\rm sh} \frac{\rho}{2})^{1/K},  ({\rm sh} \frac{\rho}{2})^{K} \} \right).
\]  
Finally, by Theorem \ref{fuji}  for $c \geq 1$
\[
\le  K^{1+c}\, \lambda(K)^{1/2}\, \max \{  (\log(1+2c\, {\rm sh} \frac{\rho}{2}))^{1/K}, \log(1+2c \,{\rm sh} \frac{\rho}{2})\}
\]
 \[ =  K^{1+c}\, \lambda(K)^{1/2}\, 
\max \{ h_{ \mathbb{H},c}(x,y)^{1/K}, h_{ \mathbb{H},c}(x,y) \}\,.
\]
\hfill $\square$
\medskip

{\bf Acknowledgements.} We are indebted to the referees for several useful comments.

\vspace{0.5cm}


\begin{thebibliography}{HIMPS}


\bibitem[AVV]{avv} \textsc{G.\,D. Anderson, M.\,K. Vamanamurthy, and M. Vuorinen},
Conformal Invariants, Inequalities, and Quasiconformal Maps, John Wiley \& Sons, New York, 1997.



\bibitem[B]{b} {\sc  A.\,F. Beardon}, The Geometry of Discrete Groups,
 Springer-Verlag, New York, 1983.
 

\bibitem[DHV]{dhv}   
 {\sc O. Dovgoshey, P. Hariri, and  M. Vuorinen},
{Comparison theorems for hyperbolic type metrics}, 
{Complex Var. Elliptic Equ.}
61,  11, (2016), 1464--1480.
 



 \bibitem[FMV]{fmv} {\sc M. Fujimura, M. Mocanu, and M. Vuorinen},
 {Barrlund's distance function and quasiconformal maps}, 
 Complex Var. Elliptic Equ.  66,  8, 1225--1255 (2021).


\bibitem[GH]{gh}  {\sc F.W. Gehring and K. Hag},  The Ubiquitous Quasidisk.
With contributions by Ole Jacob Broch.
American Mathematical Society, Providence, RI, 2012. xii+171 pp.

\bibitem[HKV]{hkv}  {\sc P. Hariri, R. Kl\'en, and M. Vuorinen},
Conformally Invariant Metrics  and Quasiconformal Mappings,
Springer Monographs in Mathematics, Springer, Berlin, 2020.


\bibitem[HIMPS]{ha}{\sc P.A.  H\"ast\"o, Z. Ibragimov, D. Minda,
S. Ponnusamy, S. Sahoo,  }
Isometries of some hyperbolic-type path metrics, and the hyperbolic medial axis.
(English summary) In the tradition of Ahlfors-Bers. IV, 63--74,
Contemp. Math., 432, Amer. Math. Soc., Providence, RI, 2007.


\bibitem[H]{h}{\sc J. Heinonen,} {Lectures on Analysis on Metric Spaces.}
 Springer-Verlag, New York, 2001.

\bibitem[P]{p} {\sc A. Papadopoulos, }  Metric spaces, convexity and non-positive curvature. Second edition. IRMA Lectures in Mathematics and Theoretical Physics,
 6. European Mathematical Society (EMS), Z\"{u}rich, 2014. xii+309 pp.


\bibitem[Ra]{ra} {\sc  O. Rainio, } Inequalities for geometric mean distance metric,
Ukrainian Math. J. 76 (10), 2024, arXiv 2404.01017. 
DOI: https://doi.org/10.3842/umzh.v76i10.7787.
\bibitem[R]{r} {\sc S. Rickman}, Quasiregular mappings,
Ergebnisse der Mathematik und ihrer Grenzgebiete (3) 
[Results in Mathematics and Related Areas (3)], 26. Springer--Verlag, Berlin, 1993.





\bibitem[WWJZ]{wwjz} {\sc Y.
Wu, G. Wang, G. Jia, and X. Zhang}, Lipschitz constants for a hyperbolic type metric under M\"obius transformations. Czech. Math. J. 74, 445--460 (2024). \hfill
\newline
https://doi.org/10.21136/CMJ.2024.0366-23.



\bibitem[ZZPG]{zzpg} {\sc Q.
Zhou, Z. Zheng, S. Ponnusamy, and T. Guan}, Dovgoshey--Hariri--Vuorinen's metric and Gromov hyperbolicity. Arch. Math. 123, 319--327 (2024). https://doi.org/10.1007/s00013--024--02021--w.
\end{thebibliography}
\end{document}